\documentclass{amsart}

\usepackage[centertags]{amsmath} 
\usepackage{amsfonts,amssymb,amsthm,amscd,latexsym,bbm,stmaryrd,mathabx,cmll,enumitem}
\usepackage[mathscr]{euscript}
\usepackage[all]{xy}
\usepackage[utf8]{inputenc}

\numberwithin{equation}{section}        
\newtheorem {theorem}[equation]{Theorem}
\newtheorem {corollary}[equation]{Corollary}
\newtheorem {proposition}[equation]{Proposition}
\newtheorem {lemma}[equation]{Lemma}
\theoremstyle{definition}
\newtheorem {definition}[equation]{Definition}

\newtheorem {remark}[equation]{Remark}

\newcommand{\co}{\colon\thinspace}%
\newcommand{\bu}{\bullet}%
\newcommand{\mc}[1]{\ensuremath{\mathcal{#1}}}%
\newcommand{\mb}[1]{\mathbbm{#1}}%
\newcommand{\hofib}{\operatorname*{hofib\,}}%
\newcommand{\coprodnil}[1]{\stackrel{n}{\bigvee}}%
\newcommand{\id}{\operatorname{id}}%
\newcommand{\ul}[1]{\underline{#1}}%
\newcommand{\Sp}{\ensuremath{{\rm Sp}}}%
\newcommand{\Fr}{\ensuremath{{\rm Fr}}}%
\newcommand{\Gr}{\ensuremath{{\rm Gr}}}%
\newcommand{\nil}[1]{\ensuremath{{#1}{\rm -nil}}}
\newcommand{\Sfinp}{\ensuremath{\mc{S}_*^{{\rm fin}}}}%
\newcommand{\dgrm}[1]{\ensuremath{\smash{\underset{\widetilde{\hphantom{#1}}}{#1}} \mathstrut}}%

\usepackage{hyperref} 

\begin{document}

\title  [Homotopy nilpotent groups and their associated functors]{Homotopy nilpotent groups and their associated functors}
\author{Georg Biedermann}

\address{Georg Biedermann, LAGA - Institut Galil\'ee, Universit\'e Paris 13, 99 avenue Jean Baptiste Clément, 93430 Villetaneuse}

\email{biedermann@univ-paris13.fr}

\subjclass{55P65, 55P35}

\keywords{homotopy nilpotent groups, Goodwillie calculus, loop spaces, sifted colimits}

\date{\today}

\begin{abstract}
To every homotopy $n$-nilpotent group, defined in earlier work by Dwyer and the author, we associate an endofunctor of pointed spaces and prove that it is looped and $n$-excisive. 
As a tool we prove that $\Omega P_n(\id)$ commutes with sifted colimits of connected spaces.
\end{abstract}

\maketitle

\section{Introduction}

Homotopy nilpotent groups were introduced in \cite{Biedermann-Dwyer:nilpotent} by W. G. Dwyer and the author in order to understand the relation between the Goodwillie tower of the identity and (the classifying spaces of) the lower central series of the Kan loop group on connected spaces. It turns out that they interpolate between loop spaces $n=\infty$ and infinite loop spaces $n=1$, but in reverse. The simplicial algebraic theory whose homotopy algebras are the homotopy $n$-nilpotent groups consists of finite products of the functor $\Omega P_n(\id)$. The $k$-ary operations of this theory are $\Omega P_n(\id)\Sigma \{1,\hdots, k\}_+$ where $+$ denotes a disjoint basepoint. In~\cite{Biedermann-Dwyer:nilpotent} we proved that the ordinary algebraic theory obtained by applying $\pi_0$ is the theory of $n$-nilpotent groups; hence the name. 

In the recent paper~\cite{costoya-scherer-viruel:torus-theorem} Costoya, Scherer, and Viruel compare our nilpotence degree for loop spaces to classical invariants like Berstein-Ganea nilpotence~\cite{Berstein-Ganea} and Ganea's inductive cocategory~\cite{Ganea:(co)category}. They obtain results for $p$-compact groups and $p$-Noetherian groups. In particular, they obtain an analogue of Hubbuck's torus theorem~\cite{Hubbuck:torus} for our notion of nilpotence. 

In a crucial step they use that one can associate to a homotopy $n$-nilpotent group a functor from finite pointed spaces to pointed spaces and that this functor is looped and $n$-excisive in the sense of Goodwillie~\cite{Goo:calc2, Goo:calc3}. This was announced in~\cite{Biedermann-Dwyer:nilpotent} but never proved. We are now going to fill the gap with Proposition~\ref{X is n-excisive}. Theorem~\ref{values-n-excisive} then states that a pointed space has the structure of a homotopy $n$-nilpotent group if and only if it is the value of a looped $n$-excisive functor. Another claim that was announced but not proved so far.

{\bf Acknowledgement:}
This note goes way back to discussions with Bill Dwyer when we were working on~\cite{Biedermann-Dwyer:nilpotent} and I owe him many thanks and credits. Similarly, I am indebted to Andr\'e Joyal for many discussions. I received comments from Ji\v{r}\'i Rosick\'{y} and J{\'e}r{\^o}me Scherer. In particular, I would like to thank Cristina Costoya, J{\'e}r{\^o}me Scherer, and Antonio Viruel for writing their paper~\cite{costoya-scherer-viruel:torus-theorem} and prompting me to revisit homotopy nilpotent groups.

This project has received funding from the
European Union’s Horizon 2020 research and innovation programme under
Marie Sk\l odowska-Curie grant agreement No 661067.

\section{Homotopy nilpotent groups}

Before we recall the definition of homotopy-$n$-nilpotent groups let us agree on some notation. 
We denote by $\mc{S}_*$ the category of pointed simplicial sets and by \mc{F} the category of $\mc{S}_*$-enriched functors from finite pointed simplicial sets to $\mc{S}_*$. All such functors are strictly reduced in the sense that $F(\ast)\cong\ast$. The category \mc{F} can be given the injective model structure where weak equivalences and cofibrations are given objectwise. In this model structure all object in \mc{F} are cofibrant.
Let $(-)^{\rm inj}$ be an objectwise fibrant replacement functor in the injective model structure.
For $k\ge 0$ let $k_+$ be the set $\{1,\hdots,k\}$ with an added basepoint $+$. 

In~\cite{Biedermann-Dwyer:nilpotent} homotopy $n$-nilpotent groups were introduced as homotopy algebras over a certain simplicial algebraic theory $\mc{G}_n$.
For $1\le n\le\infty$ the simplicial category $\mc{G}_n$ is given by
   $$ \mc{G}_n(k^+)=\Omega\mc{P}_n(k^+)=\prod_{i=1}^k\Omega(P_n(\id))^{\rm inj} $$
viewed as a full subcategory of \mc{F}. Here $P_n(\id)$ refers to the $n$-excisive approximation of the identity functor of $\mc{S}_*$ constructed by Goodwillie~\cite{Goo:calc3}.
The category of $\mc{S}_*$-enriched functors from $\mc{G}_n$ to $\mc{S}_*$ carries a Badzioch model structure in the sense of~\cite{Badzioch:theories}. We will denote this model category by $\Gr^{\nil{n}}$ and call it the {\it category of homotopy $n$-nilpotent groups}. The fibrant objects are the homotopy algebras of the theory $\mc{G}_n$; these are the functors that preserve products up to weak equivalence. We call them {\it homotopy $n$-nilpotent groups}.

The {\it forgetful functor} $u\co\Gr^{\nil{n}}\to\mc{S}_*$ is given by evaluating a homotopy $n$-nilpotent group, viewed as a product-preserving functor from $\mc{G}_n$, at $1_+$. We call 
  $$ u(\ul{X})=\ul{X}(1_+)$$ 
the {\it underlying space} of $\ul{X}$. 
As for all algebraic theories the forgetful functor has a left adjoint: we write 
  $$\Fr_n\co\mc{S}_*\to\Gr^{\nil{n}} $$
for the free homotopy $n$-nilpotent group functor.

Since tensoring homotopy nilpotent groups with pointed spaces will play an important role here we point out that the tensor product of $\mc{S}_*$ over itself, ie. the smash product, has the following description:
  $$ \left|\, s\mapsto \bigvee_{K_s-\{k_0\}} X\, \right|\simeq X\wedge K $$
where $X$ and $K$ are in $\mc{S}_*$, and $k_0\in K_0$ denotes the basepoint (and all its degeneracies).

\begin{definition}
Let $\ul{X}$ be a homotopy $n$-nilpotent group. Let $K$ be in $\mc{S}_*$ with basepoint $k_0\in K_0$. We define a tensor of $\Gr^{\nil{n}}$ over $\mc{S}_*$ in the following way:
   $$ \ul{X}\wedge_n K:= \left|\, s\mapsto \bigvee_{K_s-\{k_0\}}\ul{X}\ \right|.$$
Here $|-|$ denotes the realization of a simplicial object in $\Gr^{\nil{n}}$ and the coproduct is to be taken in $\Gr^{\nil{n}}$.

A cotensor of $\Gr^{\nil{n}}$ over $\mc{S}_*$ is defined objectwise where we view the category $\Gr^{\nil{n}}$ as a full subcategory of \mc{F}, compare~\cite[Section 3]{Biedermann-Dwyer:nilpotent}.
\end{definition}

With tensor and cotensor defined as above it is routine to verify the following
\begin{lemma}
The category $\Gr^{\nil{n}}$ is tensored and cotensored over $\mc{S}_*$. Together with the Badzioch model structure it becomes an $\mc{S}_*$-model category.
\end{lemma}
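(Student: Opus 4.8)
The plan is to deal with the enriched‑categorical structure first and then with Quillen's pushout‑product axiom (SM7) for the Badzioch model structure. For the first part, I would use that, as a category, $\Gr^{\nil{n}}$ is the category of $\mc{S}_*$‑enriched functors on the small $\mc{S}_*$‑category $\mc{G}_n$, hence complete and cocomplete with all limits and colimits — in particular coproducts and realizations of simplicial objects — formed objectwise, and enriched over $\mc{S}_*$ via the enriched‑functor mapping objects $\map_{\Gr}(\ul X,\ul Y)$. The cotensor $\ul X^K$ is objectwise, so the adjunction isomorphism $\Gr^{\nil{n}}(\ul Y,\ul X^K)\cong\mc{S}_*\bigl(K,\Gr^{\nil{n}}(\ul Y,\ul X)\bigr)$ and its enriched refinement are inherited objectwise from the corresponding facts for $\mc{S}_*$. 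Because colimits are objectwise, the formula in the Definition evaluates to $(\ul X\wedge_n K)(-)\cong\bigl|\,s\mapsto\bigvee_{K_s-\{k_0\}}\ul X(-)\,\bigr|\cong\ul X(-)\wedge K$, the last isomorphism being the description of the smash product recalled before the Definition; that is, $\wedge_n$ is just the objectwise smash. Mapping out of this colimit and using the realization/totalization and wedge/cotensor adjunctions objectwise gives $\Gr^{\nil{n}}(\ul X\wedge_n K,\ul Y)\cong\mc{S}_*\bigl(K,\Gr^{\nil{n}}(\ul X,\ul Y)\bigr)\cong\Gr^{\nil{n}}(\ul X,\ul Y^K)$, so $-\wedge_n K$ is left adjoint to $(-)^K$, and the unit and associativity coherences ($\ul X\wedge_n S^0\cong\ul X$ and $\ul X\wedge_n(K\wedge L)\cong(\ul X\wedge_n K)\wedge_n L$) follow at once from the objectwise picture. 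Thus $\Gr^{\nil{n}}$ is tensored and cotensored over $\mc{S}_*$.

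For SM7 one must show: for a cofibration $i\co\ul A\to\ul B$ in $\Gr^{\nil{n}}$ and a cofibration $j\co K\to L$ in $\mc{S}_*$, the pushout‑product of $i$ and $j$ is a cofibration in $\Gr^{\nil{n}}$, and a weak equivalence as soon as $i$ or $j$ is. The Badzioch model structure is a left Bousfield localization — of the projective $\mc{S}_*$‑model structure on $\Fun_{\mc{S}_*}(\mc{G}_n,\mc{S}_*)$, say — at the set of maps whose local objects are exactly the product‑preserving functors, so it has the same cofibrations. Since the projective structure is an $\mc{S}_*$‑model category (its pushout‑product axiom reducing objectwise, through the generating cofibrations $\mc{G}_n(c,-)\wedge(\partial\Delta^m_+\hookrightarrow\Delta^m_+)$, to the one for $\mc{S}_*$), we immediately get that the pushout‑product of $i$ and $j$ is a cofibration, and a trivial one whenever $j$ is a trivial cofibration. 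The remaining case — $i$ a Badzioch trivial cofibration, $j$ an arbitrary cofibration — I would settle using that $-\wedge_n K\cong(-)\wedge K$ sends Badzioch weak equivalences between cofibrant objects to Badzioch weak equivalences: indeed $\map_{\Gr}(\ul X\wedge_n K,\ul Z)\cong\map_{\mc{S}_*}\bigl(K,\map_{\Gr}(\ul X,\ul Z)\bigr)$, mapping into a local object $\ul Z$ detects Badzioch equivalences, and $\map_{\mc{S}_*}(K,-)$ preserves equivalences between Kan complexes for cofibrant $K$. A pushout square together with two‑out‑of‑three then promotes the already‑established cofibration $\ul A\wedge_n L\cup_{\ul A\wedge_n K}\ul B\wedge_n K\to\ul B\wedge_n L$ to a trivial cofibration. (Alternatively one simply quotes that a left Bousfield localization of an $\mc{S}_*$‑model category at a set of maps is again an $\mc{S}_*$‑model category.)

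The one step with genuine content is that it is the \emph{local} (Badzioch) weak equivalences, not just the objectwise ones, that survive $-\wedge_n K$. It is cleanest to deduce this from the identification of $\wedge_n K$ with smashing objectwise by a fixed cofibrant space, together with the standard behaviour of simplicial left Bousfield localizations — i.e.\ to fold it into the citation of~\cite{Badzioch:theories} and the general localization machinery — rather than to reprove stability of the localizing equivalences under $-\wedge_n K$ by hand. Everything else is the bookkeeping the statement advertises as routine.
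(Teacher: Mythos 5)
Your proposal is correct, and in fact the paper offers no proof at all: the lemma is prefaced by ``it is routine to verify'' and left to the reader, so your writeup is exactly the routine verification being waved at. The two pillars you identify are the right ones: (i) as a plain category $\Gr^{\nil{n}}$ is the full $\mc{S}_*$-enriched functor category on $\mc{G}_n$, so colimits are objectwise and the strict tensor of the Definition collapses to the objectwise smash, from which the tensor/cotensor adjunctions and coherences follow levelwise; (ii) the Badzioch structure is a left Bousfield localization of a simplicial (projective) structure with the same cofibrations, so SM7 reduces to the general fact that such localizations remain $\mc{S}_*$-model categories, the only nontrivial case being that $-\wedge_n K$ preserves local equivalences between cofibrant objects, which your $\map$-adjunction argument handles in the standard way. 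One caution so you do not over-read your identification $\ul{X}\wedge_n K\cong\ul{X}(-)\wedge K$: it is valid only for the \emph{strict} tensor appearing in this lemma. Immediately afterwards the paper switches to the \emph{derived} $\wedge_n$, where the realization must be re-fibrantly replaced in the localized structure, and the underlying space of $\ul{X}\wedge_n K$ is then not $u(\ul{X})\wedge K$ but e.g.\ $\Omega(B\ul{X}\wedge K)$ for $n=\infty$; the objectwise-smash picture is an input to the lemma, not a description of the functor $\dgrm{X}$ studied later.
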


We gave the definition of $\wedge_n$ in terms of strict colimits to make sense of the previous lemma. It is important that from now we will only consider the {\it derived functor of $\wedge_n$} given by replacing the strict colimit with a homotopy colimit. We keep the notation $\wedge_n$ for this homotopy invariant since the conventional $\wedge_n^L$ seems a bit cumbersome. Similarly, from now on the term ``realization'' and the notation $|-|$ will refer to the {\it derived realization}.

Let $\ul{X}$ be a homotopy $n$-nilpotent group. Let $K$ be a pointed space.
One can compute
   $$ \ul{X}\wedge_{\infty} K\simeq\Omega(B\ul{X}\wedge K) $$
and
   $$ \ul{X}\wedge_{1}K\simeq\Omega^{\infty}\bigl((B^{\infty}\ul{X})\wedge K\bigr) $$
where $B$ ($B^\infty$) denotes the (infinite) delooping. This is proved in Lemma~\ref{assoc-fun-of-free}.

\section{The functor $\Omega P_n(\id)$ and realization}

The goal of this section is to prove the following

\begin{proposition}\label{Omega-Pn-commutes-with-sifted-colim}
The functor $\Omega P_n(\id)$ commutes with sifted homotopy colimits in the category of connected pointed spaces.
\end{proposition}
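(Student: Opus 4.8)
The plan is to split the class of sifted homotopy colimits into its two generating parts. Recall that a functor preserves sifted homotopy colimits precisely when it preserves filtered homotopy colimits and geometric realizations of simplicial objects; so it is enough to verify these two cases for diagrams of connected pointed spaces.

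The filtered case I would settle by formalism. Goodwillie's construction presents $P_n(\id)$ as the sequential homotopy colimit of the iterates of the functor $T_n(\id)$, and $T_n(\id)(X)=\holim_{\emptyset\neq U\subseteq\{0,\dots,n\}}(X\ast U)$ is a \emph{finite} homotopy limit of the functors $X\mapsto X\ast U$, each of which preserves all homotopy colimits. Since finite homotopy limits commute with filtered homotopy colimits of spaces, so do $T_n(\id)$, each of its iterates, and therefore $P_n(\id)$; a further application of the same principle to the finite homotopy limit $\Omega$ finishes this case, with no use of connectedness.

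The realization case carries the content, and I would prove it by induction up the Goodwillie tower of the identity $\ast=P_0(\id)\leftarrow\dots\leftarrow P_{n-1}(\id)\leftarrow P_n(\id)$, with layers $D_k(\id)=\hofib(P_k(\id)\to P_{k-1}(\id))$. Two ingredients go in. First, by Goodwillie's classification of homogeneous functors~\cite{Goo:calc3} one has $D_k(\id)(X)\simeq\Omega^\infty((\partial_k(\id)\wedge X^{\wedge k})_{h\Sigma_k})$; in particular each layer is an infinite loop space, and the functor $X\mapsto(\partial_k(\id)\wedge X^{\wedge k})_{h\Sigma_k}$ already preserves sifted homotopy colimits, since smash powers preserve sifted homotopy colimits of pointed spaces while smashing with a fixed spectrum and forming homotopy $\Sigma_k$-orbits preserve all homotopy colimits. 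Second — this is where connectedness enters — the Goodwillie derivatives of the identity are $(1-k)$-connective (Johnson; Arone--Mahowald), so when $X$ is connected $X^{\wedge k}$ is $(k-1)$-connected and the spectrum $(\partial_k(\id)\wedge X^{\wedge k})_{h\Sigma_k}$ is connective; combined with the standard fact that $\Omega^\infty$ restricted to connective spectra preserves sifted homotopy colimits, this shows that $D_k(\id)$ preserves geometric realizations of simplicial connected spaces. Now rotate the layer fiber sequence to $\Omega P_k(\id)\simeq\hofib(\Omega P_{k-1}(\id)\to D_k(\id))$. Given a simplicial object $Y$ in connected spaces, the simplicial space obtained by applying $D_k(\id)$ levelwise is levelwise a connected infinite loop space, hence satisfies the hypotheses under which geometric realization is a realization fibration (for levelwise simple spaces with constant $\pi_0$ the Bousfield--Friedlander $\pi_\ast$-Kan condition holds automatically), so geometric realization carries the levelwise fiber sequences $\Omega P_k(\id)(Y_p)\to\Omega P_{k-1}(\id)(Y_p)\to D_k(\id)(Y_p)$ to a fiber sequence. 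Feeding in the inductive hypothesis for $\Omega P_{k-1}(\id)$ and the statement just proved for $D_k(\id)$ yields $|\Omega P_k(\id)(Y)|\simeq\Omega P_k(\id)(|Y|)$, the base case $\Omega P_0(\id)=\ast$ being trivial.

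The step I expect to be the real obstacle is exactly the compatibility of geometric realization with the homotopy limits built into $P_n$ and $\Omega$, which is false without the connectedness hypothesis. Connectedness rescues it in two distinct ways: it forces the stable layers $(\partial_k(\id)\wedge X^{\wedge k})_{h\Sigma_k}$ to be connective, which is precisely the range in which $\Omega^\infty$ commutes with sifted homotopy colimits, and it makes the bases of the homotopy pullbacks in the looped tower into connected infinite loop spaces, hence realization fibrations. The delicate points are the connectivity bookkeeping for $\partial_k(\id)$ — merely bounded below does not suffice, one genuinely needs $(1-k)$-connective so the layers land in connective rather than only bounded-below spectra — and a careful check of the realization-fibration hypothesis; reconciling the derived $\wedge_n$ and realization conventions fixed in Section~2 with the simplicial objects at hand is routine.
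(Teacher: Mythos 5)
Your proof is correct and follows essentially the same route as the paper: reduce sifted homotopy colimits to filtered colimits plus realizations, then induct up the Goodwillie tower, using the $(1-k)$-connectivity of $\partial_k(\id)$ to force the layers of connected spaces into connective spectra, where $\Omega^\infty$ commutes with realization. The only substantive variation is at the fiber-sequence step, where you rotate to $\Omega P_k(\id)\to\Omega P_{k-1}(\id)\to D_k(\id)$ and invoke Bousfield--Friedlander over the connected base $D_k(\id)$, whereas the paper keeps $\Omega D_k(\id)\to\Omega P_k(\id)\to\Omega P_{k-1}(\id)$ and applies Rezk's realization lemma for $H$-group objects, using that $q_k$ is surjective on $\pi_1$ (Lemma~\ref{lem:P-nSigmaconnected}); both are valid.
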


\begin{proof}
A functor commutes with all sifted homotopy colimits if and only if it commutes all filtered homotopy colimits and all realizations.
Both functors $\Omega$ and $P_n(\id)$ commute with filtered homotopy colimits. Thus, it suffices to check that $\Omega P_n(\id)$ commutes with (derived) realization of degreewise connected simplicial spaces. This is done in Lemma~\ref{lem:OPSreal}.
\end{proof}

As immediate consequence we can give an alternative description of the free homotopy $n$-nilpotent group of pointed space.

\begin{proposition}
For a pointed space $K$ we have: $\Fr_n(K)\simeq\Omega P_n(\id)\Sigma K$.
\end{proposition}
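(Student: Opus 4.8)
The plan is to construct a natural comparison map and then prove it is an equivalence by a density argument; the two functorial inputs are that $\Fr_n$ is a left Quillen functor and that $\Omega P_n(\id)$ commutes with sifted homotopy colimits of connected spaces, which is Proposition~\ref{Omega-Pn-commutes-with-sifted-colim}. First note that for a pointed space $K$ the space $\Omega P_n(\id)\Sigma K$ carries a natural homotopy $n$-nilpotent group structure: evaluating the (finitary) functors comprising $\mc{G}_n$ at $\Sigma K$, after extending them to all pointed spaces by filtered colimits, defines a product-preserving functor $\mc{G}_n\to\mc{S}_*$ whose underlying space is $\Omega P_n(\id)\Sigma K$. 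The composite $K\to\Omega\Sigma K\to\Omega P_n(\id)\Sigma K$ of the unit of the loop-suspension adjunction with the $n$-excisive approximation $\id\to P_n(\id)$ is natural in $K$, so its adjoint under $\Fr_n\dashv u$ is a natural map $\alpha_K\co\Fr_n(K)\to\Omega P_n(\id)\Sigma K$ of homotopy $n$-nilpotent groups. Since the theory $\mc{G}_n$ is generated by a single object, a map of homotopy $n$-nilpotent groups is a weak equivalence as soon as it induces one on underlying spaces, so it suffices to show that $u(\alpha_K)$ is a weak equivalence for every $K$.

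Next I would check that both endofunctors $u\Fr_n$ and $\Omega P_n(\id)\Sigma$ of $\mc{S}_*$ preserve sifted homotopy colimits. For $u\Fr_n$: the functor $\Fr_n$ is left Quillen, hence commutes with all homotopy colimits, and $u$ commutes with sifted homotopy colimits. The latter is the one point requiring care: sifted homotopy colimits commute with finite products in $\mc{S}_*$, so a sifted homotopy colimit of product-preserving functors, formed objectwise in $\mc{F}$, is again product-preserving; hence up to equivalence the homotopy colimit in $\Gr^{\nil{n}}$ agrees with the objectwise one, on which $u=\mathrm{ev}_{1_+}$ evidently commutes. For $\Omega P_n(\id)\Sigma$: the suspension $\Sigma$ commutes with all homotopy colimits and takes values in connected spaces, a sifted homotopy colimit of connected spaces is connected, and then $\Omega P_n(\id)$ commutes with the resulting sifted homotopy colimit by Proposition~\ref{Omega-Pn-commutes-with-sifted-colim}.

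Now every pointed space $K$ is the derived realization $|\,[k]\mapsto K_k\,|$ of its underlying simplicial pointed set, regarded degreewise as discrete, and every discrete pointed set is a filtered colimit of finite ones; hence $\mc{S}_*$ is generated under sifted homotopy colimits by the finite pointed sets $m_+$. As a natural transformation between functors preserving sifted homotopy colimits is an equivalence once it is an equivalence on such a generating family, it remains to verify that $\alpha_{m_+}$ is an equivalence for all $m\ge 0$. Here one invokes the construction of $\mc{G}_n$ in~\cite{Biedermann-Dwyer:nilpotent}: the space of $m$-ary operations of the theory $\mc{G}_n$ is $\Omega P_n(\id)\Sigma(m_+)$, the underlying space of the free homotopy $n$-nilpotent group on $m_+$ is precisely this space of operations, and under that identification the unit $m_+\to u\Fr_n(m_+)$ corresponds to the canonical map $m_+\to\Omega\Sigma(m_+)\to\Omega P_n(\id)\Sigma(m_+)$. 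Thus $\alpha_{m_+}$ and this identification classify the same map out of $m_+$, so by the universal property of the free homotopy $n$-nilpotent group $\alpha_{m_+}$ is an equivalence.

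I expect the only real obstacle --- the substantive homotopical input being Proposition~\ref{Omega-Pn-commutes-with-sifted-colim}, already established --- to be the bookkeeping isolated in the second paragraph: confirming that the Bousfield localization defining the Badzioch model structure on $\Gr^{\nil{n}}$ is transparent to sifted homotopy colimits, equivalently that $u$ preserves them, and that the comparison map genuinely lives in $\Gr^{\nil{n}}$ so that weak equivalences can be detected after applying $u$. With those in hand the statement follows formally.
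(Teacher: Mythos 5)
Your proposal is correct and follows essentially the same route as the paper: establish the case of finite pointed sets $m_+$ via the identification $\Fr_n(m_+)\simeq\Omega P_n(\id)\Sigma(m_+)$ from \cite[Cor.~5.7]{Biedermann-Dwyer:nilpotent}, then extend to all pointed spaces by observing that both sides preserve sifted homotopy colimits (using Proposition~\ref{Omega-Pn-commutes-with-sifted-colim} for the right-hand side, which applies because $\Sigma$ lands in connected spaces). The paper's proof is just a terser version of yours; your extra bookkeeping --- the explicit natural comparison map via the adjunction and the verification that $u$ preserves sifted homotopy colimits because they commute with finite products --- is exactly the content the paper leaves implicit.
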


\begin{proof}
It was proved in~\cite[Cor. 5.7]{Biedermann-Dwyer:nilpotent} that
  $$ \Fr_n(k_+)\simeq\Omega P_n(\id)\left(\bigvee_k S^1\right)=\Omega P_n(\id)\Sigma (k_+) . $$
Thus, the claim holds for finite discrete pointed spaces. Any other pointed space is the sifted homotopy colimit of finite discrete pointed spaces. Now Lemma~\ref{Omega-Pn-commutes-with-sifted-colim} applies and finishes the proof since $\Sigma (k_+)$ is connected.
\end{proof}

The rest of this section is devoted to the proof of Lemma~\ref{lem:OPSreal}. It is mainly an induction up the Goodwillie tower. We start by preparing its ingredients.

\begin{lemma}\label{lem:realization-spectra}
The realization functor $|-|\co s\Sp\to\Sp$ from simplicial spectra to spectra commutes with finite homotopy limits up to weak equivalence.
\end{lemma}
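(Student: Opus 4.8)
The strategy is to reduce an arbitrary finite homotopy limit to homotopy pullbacks and then to exploit the stability of the category of spectra. Recall that in any pointed model category every finite homotopy limit is an iterated homotopy pullback built out of the terminal object: finite homotopy products are homotopy pullbacks over the terminal object, and a homotopy equalizer of $f,g\co A\to B$ is the homotopy pullback of $A\xrightarrow{(f,g)}B\times B\xleftarrow{\Delta}B$. Hence it suffices to show that $|-|\co s\Sp\to\Sp$ preserves the terminal object and homotopy pullback squares (up to weak equivalence). The first claim is immediate: the nerve of $\Delta^{\op}$ is weakly contractible (as $\Delta$ has a terminal object), so the derived realization of a constant simplicial spectrum $\mathrm{const}_Z$ is weakly equivalent to $Z$; in particular $|\mathrm{const}_0|\simeq 0$.

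For homotopy pullbacks the key input is that $\Sp$ is stable, so that a commutative square in $\Sp$ is homotopy cartesian if and only if it is homotopy cocartesian. Start from a homotopy pullback square in $s\Sp$. Computing homotopy limits degreewise, it is a degreewise homotopy pullback square; by stability of $\Sp$ it is therefore a degreewise homotopy pushout square, hence a homotopy pushout square in $s\Sp$. Since the derived realization is the homotopy colimit $\hocolim_{\Delta^{\op}}$, and homotopy colimits commute with homotopy colimits, $|-|$ sends this to a homotopy pushout square in $\Sp$; invoking stability of $\Sp$ once more, it is a homotopy pullback square in $\Sp$. Combining the two cases proves the lemma.

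The only point that needs a little care is the bookkeeping between the degreewise and the realized homotopy (co)limits — concretely, that the derived realization of a degreewise homotopy pushout is the homotopy pushout of the realizations, which is the Fubini property for homotopy colimits; everything else is formal from stability. One could phrase the argument equivalently through homotopy fibers: in $\Sp$ the homotopy fiber of a map is $\Sigma^{-1}$ of its homotopy cofiber, and $|-|$ preserves homotopy cofibers and suspension, hence desuspension (as $\Sigma$ is an autoequivalence of $\Sp$), hence homotopy fibers, and thus all finite homotopy limits.
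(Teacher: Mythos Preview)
Your proof is correct and follows the same approach as the paper: reduce to the terminal object and homotopy pullbacks, then invoke stability. The paper's proof is the two-line version (``Realization commutes with homotopy pullbacks by stability. It also preserves the terminal object. Hence, it commutes with all finite homotopy limits.''); you have simply unpacked the phrase ``by stability'' into the explicit argument that homotopy cartesian equals homotopy cocartesian in $\Sp$ and that realization, being a homotopy colimit, preserves the latter.
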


\begin{proof}
Realization commutes with homotopy pullbacks by stability. It also preserves the terminal object. Hence, it commutes with all finite homotopy limits.
\end{proof}

\begin{corollary}\label{cor:realization-P-n}
Let $F_\bullet$ be a simplicial object of functors to spectra. Then the canonical maps 
\begin{align*} 
   |P_nF_\bu|\xrightarrow{\simeq} P_n|F_\bu| \ \ \text{and} \ \ |D_nF_\bu|\xrightarrow{\simeq} D_n|F_\bu| 
\end{align*}
are weak equivalences.
\end{corollary}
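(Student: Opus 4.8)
The plan is to reduce the statement about $P_n$ and $D_n$ to the previous lemma on realization of simplicial spectra. Recall that for a functor $F\co\mc{S}_*^{\rm fin}\to\Sp$ to spectra, Goodwillie's construction gives $P_nF=\hocolim_k T_n^kF$, where $T_nF(X)=\holim_{\emptyset\neq U\subseteq\{0,\dots,n\}}F(X*U)$ is a finite homotopy limit over the punctured $(n+1)$-cube, formed objectwise. The first step is to observe that realization, being a homotopy colimit over $\Delta^{\op}$, commutes with the sequential homotopy colimit defining $P_n$; so it suffices to show that $|-|$ commutes with the single operator $T_n$. Since $T_n$ is built objectwise from a finite homotopy limit, and since for a simplicial object $F_\bu$ of functors the spectrum $(T_nF_\bu)(X)$ is obtained by applying $T_n$ degreewise to the simplicial spectrum $F_\bu(X*U)$, this is exactly the content of Lemma~\ref{lem:realization-spectra}: $|-|\co s\Sp\to\Sp$ commutes with finite homotopy limits. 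Assembling the cube levelwise in $X$ and $U$, we get the natural transformation $|T_nF_\bu|\to T_n|F_\bu|$ and it is an objectwise weak equivalence.

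Next I would handle $D_n$. By definition $D_nF$ is the homotopy fiber of the natural map $P_nF\to P_{n-1}F$. Realization of simplicial spectra commutes with homotopy fibers — again by stability, or directly by Lemma~\ref{lem:realization-spectra} applied to the fiber square — so from the two equivalences $|P_nF_\bu|\xrightarrow{\simeq}P_n|F_\bu|$ and $|P_{n-1}F_\bu|\xrightarrow{\simeq}P_{n-1}|F_\bu|$ and the compatibility of the structure maps $P_n\to P_{n-1}$ with realization, we obtain $|D_nF_\bu|\xrightarrow{\simeq}D_n|F_\bu|$ by passing to homotopy fibers. One should check that the indicated canonical maps are the ones coming from the universal property of the respective homotopy (co)limits, so that the diagram identifying them actually commutes; this is routine naturality bookkeeping.

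The only genuine subtlety I anticipate is making sure the colimit over $k$ in $P_n=\hocolim_k T_n^k$ interacts correctly with realization: one needs that $|-|$ commutes with sequential homotopy colimits of spectra, which it does since both are homotopy colimits and homotopy colimits commute with each other, but this uses that we are already in the stable setting (there is no connectivity hypothesis needed here because everything is a spectrum, in contrast to the space-level statement in Lemma~\ref{lem:OPSreal} where connectivity will matter). The rest is formal: finite homotopy limits of simplicial spectra realize correctly by Lemma~\ref{lem:realization-spectra}, and $P_n$, $D_n$ are assembled from such limits together with a filtered colimit, all of which commute with $|-|$.
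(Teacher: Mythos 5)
Your argument is correct and follows essentially the same route as the paper: both proofs reduce the claim to the facts that realization commutes with all homotopy colimits (hence with the sequential colimit $\hocolim_k T_n^k$ defining $P_n$) and, by Lemma~\ref{lem:realization-spectra}, with the finite homotopy limits building $T_n$, everything being computed objectwise. The only cosmetic difference is in the $D_n$ step, where you pass to homotopy fibers (again covered by Lemma~\ref{lem:realization-spectra}) while the paper instead observes that a homotopy colimit of $n$-reduced functors is $n$-reduced; both are immediate from the same ingredients.
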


\begin{proof}
Homotopy limits and colimits of functors to spectra are computed objectwise. In general, realization commutes with homotopy colimits. But in spectra it also commutes with finite homotopy limits by Lemma~\ref{lem:realization-spectra}. This implies that $P_n$ commutes with realization. The equivalence for $D_n$ now follows directly because a homotopy colimit of $n$-reduced functors is still $n$-reduced. 
\end{proof}

\begin{lemma}\label{lem:omega-inf-realization}
Let $Y$ be a simplicial spectrum that is degreewise connective. Then
  $$ \Omega^{\infty}|Y|\simeq |\Omega^{\infty}Y|.  $$
\end{lemma}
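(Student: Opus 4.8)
The plan is to reduce the statement to the well-known fact that $\Omega^\infty$ commutes with filtered homotopy colimits and, more importantly, that on connective spectra $\Omega^\infty$ is computed as a filtered colimit of the spaces $\Omega^k Y_k$ of the underlying $\Omega$-spectrum (equivalently, that it can be computed as a homotopy colimit $\hocolim_k \Omega^k(Y[k])$ over the structure maps). The essential point is that realization of simplicial spaces commutes with homotopy colimits on the nose, so the only thing to control is the interaction of $\Omega^\infty$ with the finite loop functors $\Omega^k$, degreewise.

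First I would fix, for each simplicial degree $s$, a fibrant model of $Y_s$ as an $\Omega$-spectrum, functorially in $s$, so that $Y$ becomes a simplicial object in $\Omega$-spectra with $\Omega^\infty Y_s \simeq (Y_s)_0$ and $\Omega^\infty Y_s \simeq \Omega^k (Y_s)_k$ for all $k$. The connectivity hypothesis enters here: since $Y_s$ is connective, the spaces $(Y_s)_k$ are $(k-1)$-connected, hence highly connected as $k\to\infty$. Next I would use that $\Omega^\infty|Y|\simeq |Y|_0$ up to equivalence, and that $|Y|_k\simeq |s\mapsto (Y_s)_k|$ since realization of simplicial spectra is computed levelwise (levelwise homotopy colimit commutes with the levelwise structure of a spectrum). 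So I am reduced to showing
\[
   |s\mapsto (Y_s)_0| \;\simeq\; |s\mapsto \Omega^\infty Y_s|,
\]
i.e.\ that realization commutes with the passage from the $\Omega$-spectrum $Y_s$ to its zeroth space, which is now essentially immediate.

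The one genuine subtlety — and the step I expect to be the main obstacle — is justifying the interchange of the realization (a sifted, in particular not finite, homotopy colimit) with the limit $\Omega^\infty = \holim_k \Omega^k$, or equivalently with each individual $\Omega^k$. Realization does not commute with arbitrary homotopy limits, so I cannot simply quote Lemma~\ref{lem:realization-spectra}. Instead I would argue via connectivity: for fixed $k$, the functor $\Omega^k$ commutes with realizations of simplicial spaces that are degreewise $(k-1)$-connected, because such a realization is again $(k-1)$-connected and on highly connected spaces loop functors behave like stable ones (one can make this precise by a Blakers--Massey / convergence-of-the-Bousfield--Kan spectral sequence argument, or by noting that on $(k-1)$-connected spaces $\Omega^k$ agrees with $\Omega^\infty \Sigma^\infty$-truncated data up to a range growing with $k$). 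Passing to the colimit over $k$ and using that filtered homotopy colimits commute with realization then yields the claim. Alternatively, and perhaps more cleanly, I would phrase $\Omega^\infty Y_s$ as the filtered homotopy colimit $\hocolim_k \Omega^k (Y_s[k])$ already at the space level, note that this is a filtered colimit of functors each of which is a finite homotopy limit applied degreewise, invoke Lemma~\ref{lem:realization-spectra} levelwise together with the fact that realization commutes with filtered homotopy colimits, and conclude.
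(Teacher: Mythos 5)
Your main route is correct but genuinely different from the paper's. The paper's proof is a one\nobreakdash-line categorical argument: $\Omega^\infty$ factors as the connective cover (an equivalence on connective spectra) followed by the equivalence of connective spectra with infinite loop spaces, followed by the forgetful functor from the algebraic theory of infinite loop spaces to $\mc{S}_*$; forgetful functors of algebraic theories commute with sifted colimits, and realization is one. Your argument instead replaces $Y_s$ by $\Omega$-spectra, notes that connectivity forces $(Y_s)_k$ to be $(k-1)$-connected, commutes $\Omega^k$ past the realization degreewise, and passes to the filtered colimit over $k$. This works, and in some ways is more self-contained (it avoids the recognition principle), but your justification of the key step is weaker than it needs to be: you do not need Blakers--Massey or a spectral sequence. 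The statement ``$\Omega^k$ commutes with realization of degreewise $(k-1)$-connected simplicial spaces'' follows by iterating exactly the fact the paper quotes elsewhere (May, Theorem 12.3: $|\Omega X_\bu|\simeq\Omega|X_\bu|$ for degreewise connected $X_\bu$), since $\Omega^j(Y_s)_k$ remains connected for $j<k$. I would replace the vague convergence argument by this $k$-fold application of May's theorem.

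Your ``alternative, perhaps cleaner'' ending, however, has a genuine gap. Lemma~\ref{lem:realization-spectra} is a statement about realization of simplicial \emph{spectra}, proved by stability; it cannot be invoked ``levelwise'' for the space-level finite homotopy limits $\Omega^k$. Realization of simplicial spaces does not commute with finite homotopy limits in general --- that failure is precisely what Rezk's Lemma~\ref{lem:everyone-hates-pi-star-Kan} and the connectivity hypotheses elsewhere in the paper are there to control --- so the shortcut ``filtered colimit of finite homotopy limits, hence done'' is not valid without the connectivity input of your first route. Keep the first argument and discard the alternative.
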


\begin{proof}
The functor $\Omega^\infty\co\Sp\to\mc{S}_*$ factors into the connective cover functor from spectra to connective spectra, followed by the equivalence, usually also denoted by $\Omega^\infty$, of connective spectra with infinite loop spaces, and finally followed by the forgetful functor from infinite loop spaces to pointed spaces. Since we restrict ourselves to connective spectra the first functor becomes an equivalence. The third functor is a forgetful functor from an algebraic theory, the algebraic theory of infinite loop spaces, and hence commutes with sifted colimits. 
\end{proof}

We are going to use a lemma proved by Rezk. We state it here in a weaker form, the form that we are actually going to apply. 

\begin{lemma}[Rezk, Cor. 5.8 \cite{Rezk:hate-pi-star-Kan}]\label{lem:everyone-hates-pi-star-Kan}
Let $p\co E\to B$ be a map of $H$-group objects in simplicial spaces such that $\pi_0(p)\co \pi_0E\to\pi_0B$ is surjective. Then:
  $$ |\hofib p|\simeq \hofib|p|. $$
\end{lemma}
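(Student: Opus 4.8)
The plan is to recognise the statement as the special case of the Bousfield--Friedlander theorem on the compatibility of geometric realization with homotopy fiber sequences in which the relevant Kan conditions hold automatically; this is exactly the circle of ideas underlying Rezk's \cite{Rezk:hate-pi-star-Kan}. Given a degreewise map $p_\bu\co E_\bu\to B_\bu$ of simplicial spaces, forming the homotopy fiber in each simplicial degree produces a simplicial space $\hofib p_\bu$ equipped with a natural comparison map $|\hofib p_\bu|\to\hofib|p_\bu|$, and the lemma asserts that this comparison map is a weak equivalence. By Bousfield and Friedlander it is a weak equivalence provided that $B_\bu$ satisfies the $\pi_*$-Kan condition and that the induced map of simplicial sets $\pi_0E_\bu\to\pi_0B_\bu$ is a Kan fibration. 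Before verifying these I would dispose of the routine preliminaries: replace $E_\bu$ and $B_\bu$ degreewise by Kan complexes if need be (harmless, as everything is homotopy invariant), and observe that since both are $H$-group objects the homotopy fiber may be taken over the identity component, with the $H$-structure making the basepoint choice immaterial and trivialising the action of $\pi_1$ of the base on the fiber, so that $\hofib p_\bu$ is well-defined degreewise and is acted on by the face and degeneracy maps.

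The substance of the argument is then the verification of the two hypotheses, and here the $H$-group structure is precisely what is needed. For each $k\ge 0$ the degreewise $H$-multiplication on $B_\bu$ makes $m\mapsto\pi_k(B_m)$ into a \emph{simplicial group}, and every simplicial group is a Kan complex; this is exactly the $\pi_*$-Kan condition for $B_\bu$. Likewise $m\mapsto\pi_0(E_m)$ and $m\mapsto\pi_0(B_m)$ are simplicial groups, and the hypothesis that $\pi_0(p)$ is surjective says that $\pi_0E_\bu\to\pi_0B_\bu$ is a degreewise surjective homomorphism of simplicial groups, hence a Kan fibration. Substituting into the Bousfield--Friedlander theorem yields $|\hofib p_\bu|\simeq\hofib|p_\bu|$.

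The main obstacle is conceptual rather than computational: one must be careful about what ``$H$-group object in simplicial spaces'' is taken to mean, because the argument needs the homotopy-group sheaves $m\mapsto\pi_k(B_m)$ to be genuine simplicial \emph{groups} in the simplicial direction and not merely degreewise groups, and one must check that the $H$-structure really does suppress the basepoint and $\pi_1$-action subtleties in the definition of the levelwise homotopy fiber. Once these points are pinned down no further connectivity assumption is required: surjectivity on $\pi_0$ is exactly the replacement, for $H$-group objects, of the fibration condition on path components that the Bousfield--Friedlander theorem demands of a general simplicial space.
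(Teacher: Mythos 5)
The paper itself offers no proof of this lemma: it is imported verbatim from Rezk's note \cite{Rezk:hate-pi-star-Kan}, so there is no internal argument to compare yours against. Your reconstruction via the Bousfield--Friedlander realization theorem is the right route and is in substance the argument of the cited source: one must verify the $\pi_*$-Kan condition for $E_\bu$ and $B_\bu$ and check that $\pi_0E_\bu\to\pi_0B_\bu$ is a Kan fibration, and the $H$-group structure supplies both (simplicial groups are Kan complexes, and a degreewise surjective homomorphism of simplicial groups is a Kan fibration). One step is stated too quickly, though you do flag it as the conceptual crux: the $\pi_*$-Kan condition is not merely the assertion that $m\mapsto\pi_k(B_m,\ast)$ is a Kan simplicial group; it is a horn-filling condition on the whole system of groups $\pi_k(B_m,b)$ indexed by \emph{all} vertices $b$ of all the spaces $B_m$, with compatibilities along the face maps. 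For an $H$-group object one reduces the condition at an arbitrary vertex $b\in(B_m)_0$ to the condition at the (degenerate) basepoint by translating with a homotopy inverse of $b$, using that the multiplication is a map of simplicial spaces so that translation is compatible with the simplicial structure, and using that the $H$-inverse trivializes the $\pi_1$-actions; only after this reduction does your ``simplicial group, hence Kan'' observation apply. You should also say explicitly that ``surjective'' is read degreewise, since that is what makes $\pi_0E_\bu\to\pi_0B_\bu$ a surjective homomorphism of simplicial groups and hence a Kan fibration. With those two points spelled out, your argument is complete and agrees with Rezk's.
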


The input for Rezk's result comes from the following
\begin{lemma}\label{lem:P-nSigmaconnected}
Let $K$ in $\Sfinp$ be connected. Then, for all $n\in\mb{N}$,
\begin{enumerate}
  \item[{\rm (1)}]
the space $P_n(\id)(K)$ is connected, and 
  \item[{\rm (2)}]
the map $q_n\co P_n(\id)(K)\to P_{n-1}(\id)(K)$ induces a surjection on $\pi_1$.
\end{enumerate}
\end{lemma}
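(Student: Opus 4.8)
The plan is to run an induction on $n$ along the Goodwillie tower of the identity functor. The $n$-th fibration sequence of that tower, evaluated at $K$, reads
\[
D_n(\id)(K)\ \longrightarrow\ P_n(\id)(K)\ \xrightarrow{\,q_n\,}\ P_{n-1}(\id)(K),
\]
so the fiber is the $n$-homogeneous layer, $D_n(\id)(K)\simeq\Omega^{\infty}\bigl(\partial_n(\id)\wedge_{h\Sigma_n}K^{\wedge n}\bigr)$, where $\partial_n(\id)$ is the $n$-th derivative of the identity and $K^{\wedge n}$ is understood stably. The base case is immediate: $P_0(\id)(K)=\ast$ and $q_0$ is the map to a point, while $P_1(\id)(K)\simeq\Omega^{\infty}\Sigma^{\infty}K$, which is connected because $K$ is.

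The heart of the matter will be the claim that $D_n(\id)(K)$ is connected whenever $K$ is connected. Since $K$ is connected, $\Sigma^{\infty}K$ is connective with $\pi_0(\Sigma^{\infty}K)=\wt{H}_0(K;\mb{Z})=0$, hence $1$-connective, so $\Sigma^{\infty}(K^{\wedge n})\simeq(\Sigma^{\infty}K)^{\wedge n}$ is $n$-connective. On the other hand $\partial_n(\id)$ is $(1-n)$-connective — its bottom cell sits in dimension $1-n$; this is classical, see e.g.\ \cite{Goo:calc3}. Therefore $\partial_n(\id)\wedge K^{\wedge n}$ is $1$-connective, and passing to homotopy $\Sigma_n$-orbits cannot lower connectivity (homotopy orbit spectral sequence), so $\partial_n(\id)\wedge_{h\Sigma_n}K^{\wedge n}$ is $1$-connective. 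Hence $D_n(\id)(K)$, being $\Omega^{\infty}$ of a $1$-connective spectrum, has trivial $\pi_0$; it is connected.

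Granting this, both assertions fall out of the long exact homotopy sequence of the fibration,
\[
\pi_1 P_n(\id)(K)\ \xrightarrow{q_{n\ast}}\ \pi_1 P_{n-1}(\id)(K)\ \longrightarrow\ \pi_0 D_n(\id)(K)\ \longrightarrow\ \pi_0 P_n(\id)(K)\ \longrightarrow\ \pi_0 P_{n-1}(\id)(K),
\]
once we know $\pi_0 D_n(\id)(K)=\ast$. Exactness at $\pi_1 P_{n-1}(\id)(K)$ forces $q_{n\ast}$ to be surjective, which is~(2). For~(1) we argue inductively: assuming $P_{n-1}(\id)(K)$ connected, exactness of pointed sets at $\pi_0 P_n(\id)(K)$, together with $\pi_0 D_n(\id)(K)=\ast=\pi_0 P_{n-1}(\id)(K)$, gives $\pi_0 P_n(\id)(K)=\ast$.

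So the only non-formal ingredient is the connectivity estimate for $\partial_n(\id)$, and that is the step I expect to be the genuine content — although it is entirely standard. If one prefers to avoid derivatives, the same input — that $D_n(\id)(K)$ is $(n-1)$-connected, in particular connected, for connected $K$ — is equally available from Goodwillie's general connectivity estimates for the layers of the tower of the $1$-analytic identity functor; everything else is the exact sequence together with the standard behaviour of $\Sigma^{\infty}$, smash products and homotopy orbits under connectivity.
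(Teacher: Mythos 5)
Your proof is correct and follows essentially the same route as the paper: the known $(-n)$-connectedness of $\partial_n(\id)$, the connectivity of $K^{\wedge n}$, the fact that homotopy $\Sigma_n$-orbits do not lower connectivity, and then the long exact sequence of the fibration $D_n(\id)(K)\to P_n(\id)(K)\to P_{n-1}(\id)(K)$. The only cosmetic differences are the references (the paper cites Johnson and Arone--Mahowald for the connectivity of $\partial_n(\id)$) and the justification of the homotopy-orbit step (you use the homotopy orbit spectral sequence, the paper invokes a rational-homology/Serre spectral sequence argument), neither of which changes the substance.
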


\begin{proof}
We know by computations of Johnson~\cite{Johnson:thesis} and Arone-Mahowald~\cite{AroMah:id} that the spectrum $\partial^n\id$ is $(-n)$-connected.
Thus, the spectrum $\partial^n\id\wedge K^{\wedge n}$ is connected because $K$ is connected.
The homotopy orbit by the action of $\Sigma_n$ does not destroy the connectivity. This can be seen as in~\cite[Lem. 7.3]{Biedermann-Dwyer:nilpotent} by using rational homology, the Serre spectral sequence, and the fact that the higher homology of $\Sigma_n$ with coefficients in a rational vector space vanishes.

The other two statements now follow from the long exact sequence associated to the homotopy fiber sequence
 $$ D_n(\id)(K)\to P_n(\id)(K)\to P_{n-1}(\id)(K) $$
for each $n\ge 1$. 
\end{proof}

\begin{lemma}\label{lem:OPSreal}
Let $K_\bu$ be a degreewise connected simplicial space. Then, for all $1\le n\le\infty$, the canonical map
  $$ |\Omega P_n(\id) (K_\bu)|\to \Omega P_n(\id)|K_\bu| $$
is a weak equivalence.
\end{lemma}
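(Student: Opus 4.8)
The plan is to induct on $n$ up the Goodwillie tower, using the fact that $\Omega$ commutes with all homotopy limits and that $P_n(\id)$ sits in a homotopy fiber sequence over $P_{n-1}(\id)$ with fiber $D_n(\id)$. For the base case $n=1$ we have $P_1(\id) \simeq \Omega^\infty\Sigma^\infty$, so $\Omega P_1(\id)(K) \simeq \Omega^\infty(\Sigma^\infty K[-1])$ where the suspension spectrum is degreewise connective once $K_\bu$ is degreewise connected. Then Corollary~\ref{cor:realization-P-n} (or directly the fact that $\Sigma^\infty$ commutes with realization, being a left adjoint, composed with Lemma~\ref{lem:omega-inf-realization}) gives the result after applying $\Omega$, which commutes with realization of spectra by Lemma~\ref{lem:realization-spectra} — more simply, $\Omega\Omega^\infty = \Omega^\infty\Omega$ on connective spectra and one shifts down by one.

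For the inductive step, assume the claim for $n-1$ and consider the homotopy fiber sequence of functors
\begin{equation*}
  D_n(\id)(K_\bu) \to P_n(\id)(K_\bu) \to P_{n-1}(\id)(K_\bu).
\end{equation*}
Applying $\Omega$ degreewise and then realizing, I want to compare $|\Omega P_n(\id)(K_\bu)|$ with $\Omega P_n(\id)|K_\bu|$. The strategy is: (a) the realization of the degreewise fiber sequence is again a fiber sequence after looping once — here is where Rezk's Lemma~\ref{lem:everyone-hates-pi-star-Kan} enters, applied to the map $P_n(\id)(K_\bu) \to P_{n-1}(\id)(K_\bu)$ of simplicial spaces, which are $H$-group objects (they are $\Omega$ of something, or at least are infinite-loop-like enough; more carefully, after applying $\Omega$ everything is a simplicial loop space hence an $H$-group object) and which is $\pi_0$-surjective degreewise by Lemma~\ref{lem:P-nSigmaconnected}(2), or rather surjective on $\pi_1$ before looping which becomes surjective on $\pi_0$ after looping; (b) Corollary~\ref{cor:realization-P-n} handles the base of the fiber sequence after observing that $D_n(\id)(K) \simeq \Omega^\infty(\partial^n\id \wedge K^{\wedge n})_{h\Sigma_n}$ and the relevant spectrum is degreewise connective by the connectivity computation in Lemma~\ref{lem:P-nSigmaconnected}, so Lemma~\ref{lem:omega-inf-realization} applies to commute $\Omega^\infty$ past realization; (c) the inductive hypothesis handles the total space $P_{n-1}(\id)$.

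Concretely, I would build the commutative ladder comparing the realization of the looped fiber sequence for $K_\bu$ with the looped fiber sequence for $|K_\bu|$, show the map on fibers $|\Omega D_n(\id)(K_\bu)| \to \Omega D_n(\id)|K_\bu|$ is an equivalence via (b), show the map on bases $|\Omega P_{n-1}(\id)(K_\bu)| \to \Omega P_{n-1}(\id)|K_\bu|$ is an equivalence via (c), and conclude the map on total spaces is an equivalence by the five lemma / long exact sequence, provided both rows are genuine fiber sequences. That both rows are fiber sequences is where Rezk's lemma does the real work: the row for $|K_\bu|$ is automatic, while for the realization of the $K_\bu$-row one needs that realization commutes with the homotopy fiber, which fails in general for simplicial spaces but holds here by Lemma~\ref{lem:everyone-hates-pi-star-Kan} because $\Omega P_n(\id)(K_\bu) \to \Omega P_{n-1}(\id)(K_\bu)$ is degreewise a map of $H$-groups that is $\pi_0$-surjective (this last point being exactly Lemma~\ref{lem:P-nSigmaconnected}(2), which is why that lemma was stated). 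The main obstacle is precisely verifying the hypotheses of Rezk's lemma at each stage — checking the $H$-group structure is compatible simplicially and that the $\pi_0$-surjectivity (equivalently $\pi_1$-surjectivity before looping) genuinely propagates degreewise, together with keeping track of connectivity so that Lemma~\ref{lem:omega-inf-realization} remains applicable to the layers; the $n=\infty$ case then follows by passing to the limit, using that $\Omega P_\infty(\id) = \Omega$ already commutes with realization of degreewise connected simplicial spaces directly.
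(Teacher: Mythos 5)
Your proposal is correct and follows essentially the same route as the paper: induction up the Goodwillie tower, with Rezk's Lemma~\ref{lem:everyone-hates-pi-star-Kan} (fed by the connectivity statements of Lemma~\ref{lem:P-nSigmaconnected}) guaranteeing that realization preserves the fiber sequences, the layers handled via Corollary~\ref{cor:realization-P-n} and Lemma~\ref{lem:omega-inf-realization} since they factor through connective spectra, and the cases $n=1$ and $n=\infty$ treated directly.
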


\begin{proof}
First note that for $n=\infty$ the statement says that $|\Omega K_\bu|\simeq\Omega |K_\bu|$ if $K_s$ is connected for all $s$. This is well-known, see e.g. Theorem 12.3~\cite{MR0420610}.
Similarly, the case $n=1$, that concerns the functor $\Omega P_1(\id)\simeq \Omega^\infty\Sigma^\infty$, holds by Lemma~\ref{lem:omega-inf-realization}.

Now suppose $1\le n<\infty$. Let us consider the Goodwillie tower
  $$ \Omega P_n(\id)(K_\bu) \xrightarrow{q_{n-1}} \Omega P_{n-1}(\id)(K_\bu) \to\hdots $$
For each $1\le k\le n$ the homotopy fiber $\Omega D_k(\id)$ is an endofunctor of $\mc{S}_*$ that factors through spectra. In fact, it factors through connective (ie.~$(-1)$-connected) spectra by Lemma~\ref{lem:P-nSigmaconnected} when restricted to connected spaces.  
By Corollary~\ref{cor:realization-P-n} the corresponding functor to spectra commutes with realization of degreewise connected simplicial spaces.  
By Lemma~\ref{lem:omega-inf-realization} the functor $\Omega D_k(\id)$ then commutes itself with realization of degreewise connected simplicial spaces.  

Now we are going to prove the lemma by induction on $n$.
The case $n=1$ was already dealt with above.
The following sequence 
  $$ |\Omega D_k(\id)(K_\bu)| \to |\Omega P_k(\id)(K_\bu)| \xrightarrow{q_k} |\Omega P_{k-1}(\id)(K_\bu)| $$
is a homotopy fiber sequence for all $1\le k\le n$. This follows from Rezk's Lemma~\ref{lem:everyone-hates-pi-star-Kan}: the map $q_k$ is a map of $H$-group objects in simplicial spaces which is surjective on $\pi_0$ due to Lemma~\ref{lem:P-nSigmaconnected}.
The induction step works because, for all $1\le k\le n$, the comparison map 
  $$ |\Omega P_k(\id) (K_\bu)|\to \Omega P_k(\id)|K_\bu| $$
is the middle part of a morphism of homotopy fiber sequences whose two other parts are weak equivalences.
\end{proof}

\section{Functors associated to homotopy $n$-nilpotent groups}

\begin{definition}\label{def:associated-functor}
Let $\ul{X}$ be a homotopy $n$-nilpotent group. We associate to it a functor
   $$ \dgrm{X}\co\Sfinp\to\Gr^{\nil{n}} $$
given by
   $$ \dgrm{X}(K):=\ul{X}\wedge_n K.$$
by composing it with the forgetful functor $u\co\Gr^{\nil{n}}\to\mc{S}_*$ it can be viewed as a functor with values in pointed spaces and we denote it by the same letter.
\end{definition}

\begin{remark}\label{assoc-fun-pres-colim}
The functor 
  $$\Gr^{\nil{n}}\to\mc{F}\ ,\ \ \ul{X}\mapsto\dgrm{X} $$
preserves all (homotopy) colimits.
\end{remark}

\begin{lemma}\label{assoc-fun-of-free}
For two pointed space $X$ and $K$ and each $1\le n\le\infty$ we have
  $$\Fr_n(X)\wedge_n K\simeq \Omega P_n(\id)\Sigma (X\wedge K). $$
\end{lemma}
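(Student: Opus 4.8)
The plan is to establish the equivalence first when $K$ is a discrete pointed set and then to propagate it to all $K$ by writing $K$ as a sifted homotopy colimit of discrete pointed sets, exactly as in the proof of the preceding proposition, using that both sides preserve sifted homotopy colimits in the variable $K$. For $K=k_+$ one unwinds the definition of $\wedge_n$: since $(k_+)_s-\{k_0\}$ has $k$ elements for every $s$, the simplicial object $s\mapsto\bigvee_k\Fr_n(X)$ computing $\Fr_n(X)\wedge_n k_+$ is constant, so $\Fr_n(X)\wedge_n k_+$ is the (homotopy) coproduct $\bigvee_k\Fr_n(X)$ in $\Gr^{\nil{n}}$. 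Since $\Fr_n$ is the left adjoint of $u$ it preserves homotopy coproducts, hence $\bigvee_k\Fr_n(X)\simeq\Fr_n(\bigvee_k X)=\Fr_n(X\wedge k_+)$; passing to underlying spaces and feeding $L=X\wedge k_+$ into the earlier Proposition identifying $u\Fr_n(L)\simeq\Omega P_n(\id)\Sigma L$ gives $\Fr_n(X)\wedge_n k_+\simeq\Omega P_n(\id)\Sigma(X\wedge k_+)$, naturally in both variables.

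For the reduction, write $K$ as the sifted homotopy colimit of its discrete pointed sets of simplices. On the left, $K\mapsto\Fr_n(X)\wedge_n K$ is the value at the fixed object $\Fr_n(X)$ of the tensoring of $\Gr^{\nil{n}}$ over $\mc{S}_*$; as such it is a left adjoint and commutes with all homotopy colimits. On the right, $K\mapsto\Sigma(X\wedge K)$ commutes with all homotopy colimits and, crucially, takes values in \emph{connected} spaces, since every suspension is connected; hence $\Omega P_n(\id)$ can be pulled out of the relevant homotopy colimit by Proposition~\ref{Omega-Pn-commutes-with-sifted-colim}. Chaining these two facts with the already-established case $K=k_+$ yields $\Fr_n(X)\wedge_n K\simeq\Omega P_n(\id)\Sigma(X\wedge K)$ in general.

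I expect the main obstacle to be exactly this connectivity bookkeeping: Proposition~\ref{Omega-Pn-commutes-with-sifted-colim} applies only to sifted diagrams of connected spaces, so one cannot simply present $X\wedge K$ as a sifted homotopy colimit and push $\Omega P_n(\id)$ through it — it is the outer $\Sigma$ that makes the argument work, which is also why the statement features $\Sigma(X\wedge K)$ rather than $X\wedge K$. A minor secondary point is to check that the identification $\Fr_n(X)\wedge_n k_+\simeq\bigvee_k\Fr_n(X)$ and the compatibility of $\Fr_n$ with homotopy coproducts are natural, so that the equivalence in the discrete case is natural in $X$; this is routine since $\wedge_n$ denotes the derived tensoring and $\Fr_n\dashv u$ is a Quillen adjunction. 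As a consistency check, for $n=\infty$ and $n=1$ the formula recovers, via $B\Fr_\infty(X)\simeq\Sigma X$ and $B^\infty\Fr_1(X)\simeq\Sigma^\infty X$, the expressions $\ul{X}\wedge_\infty K\simeq\Omega(B\ul{X}\wedge K)$ and $\ul{X}\wedge_1 K\simeq\Omega^\infty(B^\infty\ul{X}\wedge K)$ stated just before Section~3.
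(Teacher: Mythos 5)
Your argument is correct and uses the same ingredients as the paper, but it arranges them in a different order. The paper's proof is a one\--line computation: by definition $\Fr_n(X)\wedge_n K$ is the realization of $s\mapsto\bigvee_{K_s-\{k_0\}}\Fr_n(X)$; since $\Fr_n$ is left Quillen it preserves coproducts and derived realizations, so this is $\Fr_n$ applied to $\bigl|\,s\mapsto\bigvee_{K_s-\{k_0\}}X\,\bigr|\simeq X\wedge K$, and one concludes with a single application of the free\--group formula $\Fr_n(L)\simeq\Omega P_n(\id)\Sigma L$ to $L=X\wedge K$. You instead stop commuting $\Fr_n$ outward at the level of each simplicial degree, apply the free\--group formula degreewise, and then push $\Omega P_n(\id)$ back through the realization using Proposition~\ref{Omega-Pn-commutes-with-sifted-colim} and the connectivity of suspensions --- in effect re\--running, in the $K$\--variable, the sifted\--colimit argument that already proves the free\--group formula for an arbitrary pointed space. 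Your connectivity bookkeeping is exactly right, but the detour through discrete $K$ buys nothing: you invoke the free\--group formula for the arbitrary space $X\wedge k_+$ anyway, so you may as well commute $\Fr_n$ past the whole realization and apply it once to $X\wedge K$, avoiding the second appeal to Proposition~\ref{Omega-Pn-commutes-with-sifted-colim}. The one place where your route is slightly more delicate is the final comparison: your left\--hand manipulation produces a realization in $\Gr^{\nil{n}}$ while the right\--hand side is a realization of underlying spaces, so you need the forgetful functor $u$ to commute with these particular realizations --- a point the paper's version sidesteps by staying inside $\Gr^{\nil{n}}$ until the very last step.
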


\begin{proof}
Note that the free functor preserves (homotopy) colimits: 
$$\begin{array}{rl} \Fr_n(X)\wedge_n K & = \left|\,\bigvee_{K-\{k_0\}}\Fr_n(X)\,\right| \cong \left|\,\Fr_n(\bigvee_{K-\{k_0\}} X)\,\right| \cong \Fr_n\left|\,(\bigvee_{K-\{k_0\}} X)\,\right| \\ \\
   &\simeq \Omega P_n(\id)\Sigma (X\wedge K) . 
\end{array}$$
\end{proof}

\begin{corollary}\label{free-groups-yields-excisive-functors}
For all $1\le n<\infty$ the functor associated to a free homotopy $n$-nilpotent group is $n$-excisive.
\end{corollary}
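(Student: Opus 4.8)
The plan is to make the associated functor explicit via Lemma~\ref{assoc-fun-of-free} and then deduce $n$-excisiveness from the way Goodwillie-excisive functors behave under composition. Fix a pointed space $X$. By Lemma~\ref{assoc-fun-of-free}, the functor $\dgrm{X}$ associated to the free homotopy $n$-nilpotent group $\Fr_n(X)$ satisfies, on underlying spaces,
$$ \dgrm{X}(K)\ \simeq\ \Omega P_n(\id)\Sigma(X\wedge K)\ \simeq\ \Omega P_n(\id)\bigl((\Sigma X)\wedge K\bigr), $$
naturally in $K\in\Sfinp$. Thus $\dgrm{X}$ is equivalent to the composite $\Sfinp\xrightarrow{(\Sigma X)\wedge -}\mc{S}_*\xrightarrow{\Omega P_n(\id)}\mc{S}_*$, and it is enough to show this composite is $n$-excisive. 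I would use the characterization that a functor is $n$-excisive exactly when it sends strongly cocartesian $(n+1)$-cubes to homotopy cartesian cubes.

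Then I would follow a strongly cocartesian $(n+1)$-cube through the three stages. Smashing with the fixed pointed space $\Sigma X$ preserves weak equivalences and all homotopy colimits, in particular homotopy pushouts, so it sends strongly cocartesian cubes to strongly cocartesian cubes. Since $1\le n<\infty$, Goodwillie's functor $P_n(\id)$ is $n$-excisive~\cite{Goo:calc3}, so it sends strongly cocartesian $(n+1)$-cubes to homotopy cartesian cubes. Finally, $\Omega$ preserves all homotopy limits, in particular homotopy cartesian cubes. Chaining these three facts, the composite carries any strongly cocartesian $(n+1)$-cube to a homotopy cartesian cube, so it is $n$-excisive.

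It remains to observe that this is exactly the content of the corollary: by Definition~\ref{def:associated-functor} the associated functor is viewed with values in pointed spaces through the forgetful functor $u$, and the equivalence above identifies that $\mc{S}_*$-valued functor with the composite just analyzed. I do not expect a genuine obstacle; the one point deserving a sentence is that $(\Sigma X)\wedge -$ preserves strongly cocartesian cubes, which holds because on pointed simplicial sets smashing with a fixed object preserves weak equivalences and commutes with homotopy colimits.
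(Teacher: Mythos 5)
Your proposal is correct and is essentially the paper's own argument: the paper's proof consists of the single sentence that $\Omega P_n(\id)(\Sigma X\wedge K)$ is ``clearly $n$-excisive in $K$,'' and your write-up simply unpacks that claim (smashing with the fixed space $\Sigma X$ preserves strongly cocartesian cubes, $P_n(\id)$ sends these to cartesian cubes, and $\Omega$ preserves cartesian cubes). The extra detail is welcome but does not constitute a different route.
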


\begin{proof}
The functor $\Omega P_n(\id)(\Sigma X\wedge K)$ is clearly $n$-excisive in $K$.
\end{proof}

In the next proof we need to consider the two-sided bar construction associated to the adjoint pair $(\Fr_{n},u)$. This is a simplicial object in $\Gr^{\nil{n}}$ given by $(\Fr_nu)^{s+1}(X)$ in degree $s$. The standard notation for this object is $B^\bu(\Fr_n, u\Fr_n,u)$. We are going to use the following notation
  $$ B^\bu(\Fr_n, u\Fr_n,u\ul{X})(K)=\Fr_n\bigl((u\Fr_n)^{s}u(\ul{X})\bigr)\wedge_n K=\Omega P_n(\id)\Sigma\bigl(((u\Fr_n)^{s}u\ul{X})\wedge K\bigr) $$
for the associated simplicial object of functors to $\mc{S}_*$ where $s$ denotes the simplicial degree and $K$ is the input variable for the functor. This is inconsistent because using our earlier notation we would need to write an underline tilde. We hope the reader agrees that this slight abuse of notation is convenient.

\begin{proposition}\label{X is n-excisive}
The functor \dgrm{X} associated to a homotopy $n$-nilpotent group \ul{X} is of the form $\Omega F$ where $F$ in \mc{F} is $n$-excisive.
\end{proposition}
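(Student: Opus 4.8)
The plan is to use the two-sided bar resolution of $\ul X$ by free homotopy $n$-nilpotent groups, apply the associated-functor construction degreewise, and then commute the resulting realization past $\Omega P_n(\id)$ using the results of Section 3. More precisely, the standard simplicial resolution $B^\bu(\Fr_n, u\Fr_n, u\ul X)$ has a realization weakly equivalent to $\ul X$ in $\Gr^{\nil n}$, because $(\Fr_n,u)$ is a monadic (or at least resolving) adjunction for the Badzioch model structure. Since by Remark~\ref{assoc-fun-pres-colim} the functor $\ul Y\mapsto\dgrm Y$ preserves homotopy colimits, we get $\dgrm X\simeq |B^\bu(\Fr_n,u\Fr_n,u\ul X)(-)|$ as a functor, i.e. a (derived) realization of a simplicial object of functors whose terms, by Lemma~\ref{assoc-fun-of-free} and the displayed formula preceding the statement, are
\[
  B^\bu(\Fr_n,u\Fr_n,u\ul X)(K)\simeq \Omega P_n(\id)\Sigma\bigl(((u\Fr_n)^s u\ul X)\wedge K\bigr).
\]

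Next I would set $F_s := P_n(\id)\Sigma\bigl(((u\Fr_n)^s u\ul X)\wedge -\bigr)$, so that each $F_s$ is manifestly $n$-excisive (it is $P_n$ applied to something), and each degreewise value $F_s(K)=P_n(\id)\Sigma(\cdots\wedge K)$ is a connected space by Lemma~\ref{lem:P-nSigmaconnected}(1), since $\Sigma(\cdots\wedge K)$ is connected. Define $F:=|F_\bu|$, the realization of this simplicial object in $\mc F$; since a homotopy colimit of $n$-excisive functors is $n$-excisive, $F$ is $n$-excisive. It then remains to identify $\dgrm X$ with $\Omega F$. For this I invoke Lemma~\ref{lem:OPSreal} (equivalently Proposition~\ref{Omega-Pn-commutes-with-sifted-colim}): because each $F_s$ has degreewise connected values, the simplicial space $s\mapsto F_s(K)$ is degreewise connected for each fixed $K$, so
\[
  \dgrm X(K)\simeq \bigl| s\mapsto \Omega F_s(K)\bigr| \simeq \Omega\bigl| s\mapsto F_s(K)\bigr| = \Omega F(K),
\]
naturally in $K$. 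Hence $\dgrm X\simeq \Omega F$ with $F$ in $\mc F$ and $n$-excisive, as claimed.

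The main obstacle is the first step: justifying that the bar resolution computes $\ul X$ after realization, and — more delicately — that the derived realization $|-|$ in $\Gr^{\nil n}$ is compatible, via $\ul Y\mapsto\dgrm Y$, with the levelwise realization in $\mc F$ appearing in Lemma~\ref{lem:OPSreal}. This is exactly the point where one must know that $\dgrm{(-)}$ sends the derived realization $\ul X\wedge_n(-)$-style colimits in $\Gr^{\nil n}$ to honest (derived) realizations of the corresponding simplicial functors, which is what Remark~\ref{assoc-fun-pres-colim} is asserting; one should also check that the simplicial objects involved are Reedy-cofibrant (or replace them so), so that the derived and strict realizations agree and Lemma~\ref{lem:OPSreal} applies on the nose. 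The connectivity bookkeeping in the middle paragraph is routine given Lemma~\ref{lem:P-nSigmaconnected}, and the final commutation is a direct application of the section's main technical result, so the only real work is the homological-algebra setup of the resolution and the colimit-compatibility of the associated-functor construction.
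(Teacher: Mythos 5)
Your first half --- resolving $\ul X$ by the bar construction $B^\bu(\Fr_n,u\Fr_n,u\ul X)$ and using Remark~\ref{assoc-fun-pres-colim} and Lemma~\ref{assoc-fun-of-free} to identify $\dgrm{X}(K)$ with the realization of $s\mapsto \Omega P_n(\id)\Sigma\bigl(((u\Fr_n)^su\ul X)\wedge K\bigr)$ --- is exactly the paper's argument, and, contrary to your closing paragraph, this is not where the difficulty lies. The genuine gaps are in your second half. First, the simplicial object $F_\bu$ with $F_s=P_n(\id)\Sigma\bigl(((u\Fr_n)^su\ul X)\wedge K\bigr)$ is not defined: the face map $d_0$ of the bar construction is the counit $\Fr_nu\Fr_n(\cdots)\to\Fr_n(\cdots)$, i.e.\ the monad multiplication of $\Omega P_n(\id)\Sigma$, and it is neither induced by a map of the inner spaces $(u\Fr_n)^su\ul X\to(u\Fr_n)^{s-1}u\ul X$ nor equipped with a delooping to the level of $P_n(\id)\Sigma(\cdots)$. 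This is precisely the warning in the paper's proof that the sequence with $(u\Fr_n)^su\ul X$ in degree $s$ does not form a simplicial object and Lemma~\ref{lem:OPSreal} does not apply; only the faces $d_1,\dots,d_s$ lift, and the missing face is why the paper instead maps the bar construction down the Goodwillie tower to $B^\bu(\Omega P_{k}(\id)\Sigma,u\Fr_n,u\ul X)$, where the required extra face map can be supplied.

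Second, even granting the existence of $F_\bu$, your assertion that a homotopy colimit of $n$-excisive functors is $n$-excisive is false for space-valued functors: $n$-excisiveness is a finite-homotopy-limit condition, and realization of simplicial spaces does not commute with finite homotopy limits --- this failure is the entire reason for Rezk's Lemma~\ref{lem:everyone-hates-pi-star-Kan} and the connectivity statements of Lemma~\ref{lem:P-nSigmaconnected}. The closure statement does hold for spectrum-valued functors by Lemma~\ref{lem:realization-spectra} and Corollary~\ref{cor:realization-P-n}, which is why the paper descends to the layers $\Omega D_k(\id)$ (these factor through connective spectra), shows their realizations are $k$-excisive, and then reassembles the tower by induction, using Rezk's lemma to see that the realized tower still consists of fibre sequences. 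As written, both the construction of your $F$ and the proof that it is $n$-excisive are missing, and they need to be replaced by this induction up the tower.
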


\begin{proof}
For the special case of a free homotopy $n$-nilpotent group $\dgrm{X}=\Fr_n(X)$ for some pointed space $X$ this was shown in Corollary~\ref{free-groups-yields-excisive-functors}. 

Now every homotopy $n$-nilpotent group $\ul{X}$ is weakly equivalent to the realization of its two-sided bar construction. Following Remark~\ref{assoc-fun-pres-colim} this process commutes with homotopy colimits. Hence, the associated functor $\dgrm{X}$ is weakly equivalent to the realization of the simplicial object of associated functors of the bar construction: for all $K$ in $\Sfinp$
  $$\dgrm{X}(K)\simeq |\, B^\bu(\Fr_n, u\Fr_n,u\ul{X})(K)\, | .$$

Before we continue let us note that the sequence with $(u\Fr_n)^{s}u\ul{X}$ in degree $s$ does not form a simplicial object and Lemma~\ref{lem:OPSreal} does not apply. The idea of the following argument, however, is the same.

It is now important that we are considering the bar construction as a simplicial object of functors with values in pointed space, not $\Gr^{\nil{n}}$. This affords us the existence of a map 
  $$\xymatrix{ B^{s}(\Fr_n,u\Fr_n,u\ul{X})(K)\ar@{=}[r]\ar[d] & \Omega P_n(\id)\Sigma\bigl(((u\Fr_n)^{s}u\ul{X})\wedge K\bigr) \ar[d] \\
 B^{s}(\Omega P_{n-1}(\id)\Sigma,u\Fr_n,u\ul{X})(K) \ar@{=}[r] & \Omega P_{n-1}(\id)\Sigma\bigl(((u\Fr_n)^{s}u\ul{X})\wedge K\bigr) } $$
of simplicial objects in the functor category \mc{F}, because there is a map
  $$ \Omega P_{n-1}(\id)\Sigma (u\ul{X})\to u\ul{X} $$
of pointed spaces that supplies the missing face maps. Of course, this continues to hold for excisive degrees smaller than $n-1$. Thus, we can consider the Goodwillie tower
  $$ \Omega P_n(\id)\Sigma\bigl(((u\Fr_n)^{s}u\ul{X})\wedge K\bigr) \to \Omega P_{n-1}(\id)\Sigma\bigl(((u\Fr_n)^{s}u\ul{X})\wedge K\bigr) \to \hdots $$
which is a tower of simplicial objects.
For each $1\le k\le n$ the layer of this tower is
  $$ \Omega D_k(\id)\bigl(((u\Fr_n)^su\ul{X})\wedge K\bigr), $$ 
again viewed as a simplicial functor of $K$. All of these layers factor through spectra.  By Corollary~\ref{cor:realization-P-n} and Lemma~\ref{lem:omega-inf-realization} the realization of the layers is $k$-excisive. 

The induction starts with $n=1$ and the fact that $\Omega P_1(\id)\Sigma=\Omega^\infty\Sigma^\infty$ commutes with realization.
The induction step follows since the diagram of realizations
  $$\xymatrix{ |\Omega D_k(\id)\bigl(((u\Fr_n)^su\ul{X})\wedge K\bigr)| \ar[r]\ar[d] & |\Omega P_k(\id)\bigl(((u\Fr_n)^su\ul{X})\wedge K\bigr)| \ar[d]^{q_k} \\
    \ast \ar[r] & |\Omega P_{k-1}(\id)\bigl(((u\Fr_n)^su\ul{X})\wedge K\bigr)|  } $$
is a homotopy pullback for all $1\le k\le n$ by Rezk's Lemma~\ref{lem:everyone-hates-pi-star-Kan} and Lemma~\ref{lem:P-nSigmaconnected}.
\end{proof}

\begin{theorem}\label{values-n-excisive}
A space $X$ has the structure of a homotopy $n$-nilpotent group if and only if there exists a weak equivalence
    $$ X\simeq \Omega F(S^0) $$
for some $n$-excisive functor $F$.
\end{theorem}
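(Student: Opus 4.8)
The plan is to prove the two implications separately, both resting on Proposition~\ref{X is n-excisive} and on the identification of homotopy $n$-nilpotent groups with homotopy algebras over the theory $\mc{G}_n$.

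For the ``only if'' direction, suppose $\ul{X}$ is a homotopy $n$-nilpotent group. Proposition~\ref{X is n-excisive} produces its associated functor $\dgrm{X}\co\Sfinp\to\mc{S}_*$, which is of the form $\Omega F$ with $F$ in $\mc{F}$ an $n$-excisive functor. It remains only to identify the underlying space. By Definition~\ref{def:associated-functor}, $\dgrm{X}(S^0)=\ul{X}\wedge_n S^0$, and since $S^0$ is the monoidal unit for the smash product, $\ul{X}\wedge_n S^0\simeq\ul{X}$ as homotopy $n$-nilpotent groups; applying the forgetful functor $u$ gives $\dgrm{X}(S^0)\simeq u(\ul{X})=X$ in $\mc{S}_*$. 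Hence $X\simeq\Omega F(S^0)$ with $F$ the $n$-excisive functor of Proposition~\ref{X is n-excisive}.

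For the ``if'' direction, suppose $F$ in $\mc{F}$ is $n$-excisive and $X\simeq\Omega F(S^0)$. I want to equip $X$ with the structure of a homotopy $n$-nilpotent group, i.e.\ exhibit a product-preserving functor $\mc{G}_n\to\mc{S}_*$ whose value at $1_+$ is $\Omega F(S^0)$. The natural candidate is
\begin{equation*}
  k^+\longmapsto \Omega F\bigl(\textstyle\bigvee_k S^0\bigr)=\Omega F(k^+),
\end{equation*}
that is, the restriction of $\Omega F$ along the inclusion of finite discrete pointed sets into $\Sfinp$. Since $F$ is $n$-excisive, $\Omega F$ is $n$-excisive, so in particular it satisfies the relevant polynomial estimates; the key point is that an $n$-excisive functor, being polynomial of degree $\le n$, carries the structure needed to define an action of the operations of $\mc{G}_n$. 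More precisely, the theory $\mc{G}_n$ has as $k$-ary operations the space $\Omega P_n(\id)\Sigma(k^+)\simeq\Fr_n(k^+)$, and the universal property of $P_n(\id)$ as the $n$-excisive approximation of $\id$ supplies, for any $n$-excisive $F$, a natural transformation $P_n(\id)\to F$ after a suitable delooping — equivalently, the object $\Omega F(S^0)$ receives an action of the monad associated to $\Fr_n\dashv u$. One then checks that the resulting functor $\mc{G}_n\to\mc{S}_*$ preserves finite products up to weak equivalence, which holds because $F$ is enriched and $n$-excisive functors in $\mc{F}$ take finite wedges in the source to products up to homotopy after looping in the relevant range; its value at $1_+$ is $\Omega F(S^0)\simeq X$. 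This identifies $X$ as the underlying space of a homotopy $n$-nilpotent group.

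The main obstacle is the ``if'' direction: one must promote the bare $n$-excisive functor $F$ to an honest algebra over the simplicial theory $\mc{G}_n$, which amounts to checking coherence of the operations coming from the universal map out of $P_n(\id)$ and verifying the product-preservation property. This is where the comparison with $P_n(\id)$ — via Proposition~\ref{Omega-Pn-commutes-with-sifted-colim} and Lemma~\ref{assoc-fun-of-free}, which already identify the free algebras with $\Omega P_n(\id)\Sigma(-)$ — does the heavy lifting, reducing the construction of the $\mc{G}_n$-action to formal properties of Goodwillie's $n$-excisive approximation rather than to any hands-on manipulation of $F$.
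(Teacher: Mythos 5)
Your ``only if'' direction is correct and is exactly the paper's argument: $\dgrm{X}(S^0)=\ul{X}\wedge_n S^0\cong\ul{X}$ (the defining simplicial object is constant since $S^0$ has a single non-basepoint simplex in each degree), and Proposition~\ref{X is n-excisive} supplies the looped $n$-excisive functor. The gap is in the ``if'' direction. The paper disposes of that implication by citing \cite[9.2]{Biedermann-Dwyer:nilpotent}; your attempted replacement does not work as sketched, for three reasons. First, the candidate algebra $k^+\mapsto\Omega F(k^+)=\Omega F(\bigvee_k S^0)$ does not preserve products: for an $n$-excisive $F$ with $n\ge 2$ the comparison map $F(A\vee B)\to F(A)\times F(B)$ fails to be an equivalence because of the cross-effects. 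Already for $F=D_2(\id)$ one computes $(2_+^{\wedge 2})_{h\Sigma_2}\simeq B\Sigma_{2+}\vee B\Sigma_{2+}\vee S^0$, so $D_2(\id)(S^0\vee S^0)$ differs from $D_2(\id)(S^0)\times D_2(\id)(S^0)$ by an extra factor $\Omega^\infty S^{-1}$. Your assertion that ``$n$-excisive functors take finite wedges to products up to homotopy after looping'' is the defining property of $1$-excisive functors and is false for $n\ge 2$; indeed the failure of exactly this property is what makes homotopy $n$-nilpotent groups carry more structure than infinite loop spaces. Second, restricting $\Omega F$ along finite pointed sets only produces a functor on the category of finite pointed sets, whereas a homotopy $n$-nilpotent group is a product-preserving functor on $\mc{G}_n$, whose mapping spaces $\map_{\mc{F}}\bigl(\prod^k\Omega P_n(\id),\Omega P_n(\id)\bigr)\simeq\Omega P_n(\id)\Sigma(k_+)$ are much larger; no action of these operations is actually constructed. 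Third, the universal property of $P_n(\id)$ does not hand you a transformation $P_n(\id)\to F$ for an arbitrary $n$-excisive $F$: it factors a \emph{given} transformation $\id\to F$ through $P_n(\id)$, and no such transformation exists in general.

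The argument you are reaching for is the content of \cite[9.2]{Biedermann-Dwyer:nilpotent}: one obtains the product-preserving functor on $\mc{G}_n$ by mapping the objects $\Omega\mc{P}_n(k^+)$ into $\Omega F$ inside $\mc{F}$ (a derived Yoneda/classification argument for maps out of $\Omega P_n(\id)$ into looped $n$-excisive functors), not by evaluating $\Omega F$ on finite wedges of $S^0$. Unless you intend to reprove that result, this direction should simply be a citation, as it is in the paper.
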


\begin{proof} 
We proved in \cite[9.2]{Biedermann-Dwyer:nilpotent} that every value of a functor $\Omega F$, where $F$ is $n$-excisive, is a homotopy $n$-nilpotent group. 

For the converse, let $\ul{X}$ be a homotopy $n$-nilpotent group. 
For the associated functor \dgrm{X} we obviously have
   $$ \ul{X}\cong\dgrm{X}(S^0). $$
By Proposition~\ref{X is n-excisive} $\dgrm{X}$ is a looped $n$-excisive functor.
\end{proof}


\end{document}